\documentclass{elsarticle}

\usepackage{amsthm, amsmath, amssymb, latexsym}
\usepackage{graphicx}
\usepackage[update,prepend]{epstopdf}

\usepackage{color}

\bibliographystyle{elsarticle-num}
\newcommand{\mR}{\mathbb R}

\theoremstyle{plain}
\newtheorem{theorem}{Theorem}

\newtheorem{corollary}{Corollary}

\theoremstyle{definition}

\newtheorem{assumption}{Assumption}

\theoremstyle{remark}
\newtheorem{remark}{Remark}

\DeclareMathOperator{\sign}{sign}

\begin{document}

\begin{frontmatter}

\title{Observer-based boundary control of the sine-Gordon model energy}
\tnotetext[mytitlenote]{This work was performed in the IPME RAS
and supported by the Russian Science Foundation (grant No 14-29-00142).}

\author[First,Third]{Maksim Dolgopolik\corref{mycorrespondingauthor}} 
\author[First,Second,Third]{Alexander L. Fradkov} 
\author[First,Second,Third]{Boris Andrievsky}

\address[First]{Institute of Problems of Mechanical Engineering, 
   Saint Petersburg, Russia (e-mails: maxim.dolgopolik@gmail.com,
   Alexander.Fradkov@gmail.com, boris.andrievsky@gmail.com).}
\address[Second]{Saint Petersburg State University, 
   Saint Petersburg, Russia}
\address[Third]{University ITMO, 
  Saint Petersburg, Russia}

\cortext[mycorrespondingauthor]{Corresponding author}

\begin{abstract}
In this paper the output feedback energy control problem for the sine-Gordon model is studied. An observer for the
sine-Gordon equation and a speed-gradient boundary control law for solving this problem are analysed.
Explicit inequalities on system's parameters ensuring the exponential decay of the estimation error are obtained.
Under an additional assumption the achievement of the control goal is proved. The results of numerical experiments
demonstrate that the transient time in energy is close to the transient time in observation error.
\end{abstract}

\begin{keyword}
boundary control \sep energy control \sep speed-gradient \sep sine-Gordon equation
\MSC[2010] 93C20 \sep 35L71
\end{keyword}

\end{frontmatter}

\section{Introduction}

This paper is devoted to control of oscillations in nonlinear distributed systems. As the example the celebrated
sine-Gordon model was chosen. The sine-Gordon equation is a semilinear wave equation used to model many physical
phenomena like Josephson junctions, seismic events including earthquakes, slow slip and after-slip processes,
dislocation in solids, etc.\cite{Cuevas14}. Though control of oscillatory modes is one of conventional areas of control
theory, most works in the past were dealing with linear models and regulation or tracking as control objectives. It was 
motivated by practical problems such as vibration suppression, vibration isolation, etc. \cite{Fuller96}. Some of the
methods were extended to distributed systems and  allowed one to stabilize wave motion
\cite{Zuazua90,Lasiecka92,Morgul94}. By the beginning of the new century quite a number of control methods for
distributed (PDE) systems were proposed for the regulation and tracking problems including optimal control, robust
control, adaptive control, etc. \cite{Troltzsch,Christofides01,Smyshlyaev10}. In a number of publications a powerful
backstepping method was developed (see \cite{Krstic08} and the references therein).

In the 1990s an interest in new problems, related to oscillations shaping and synchronization rather than their
suppression  was growing \cite{YangChua97,FP98,Pikovsky01}. An efficient  approach to such problems based on the system
speed-gradient and energy control was developed in \cite{F96,Shiriaev00}. Energy is a fundamental characteristics of a
physical system which is conserved if the losses can be neglected. Changing the desired level of energy allows one to
specify various desired properties of oscillations and describe various system modes. However an approach to energy
control proposed in \cite{F96,Shiriaev00} based on the speed-gradient method was not extended to distributed systems and
to control of waves until recent. The reason is in that the desired energy level set may be a complex unbounded set with
complex geometry. 

Perhaps the first attempt to study energy control approach for distributed systems was made in \cite{Porubov15} where  a
possibility of controlling travelling waves in the sine-Gordon model was demonstrated by simulation. In \cite{Orlov17}
two energy  control algorithms were studied rigorously: one provides spatially distributed control action while another
one provides a uniform over the space one. Both algorithms, however have some drawbacks. To improve the proposed
approach it would be desirable to study the boundary control problem.

The first boundary control algorithms for the sine-Gordon equation were proposed in
\cite{Kobayashi03,Petcu04,Kobayashi04}. In these papers, however, only the stabilization problem was considered. In
\cite{DFA16} the boundary control of the sine-Gordon system energy was studied. However the solution proposed in
\cite{DFA16} requires measurements of the system velocity which may be not available apart from the boundary of the
domain.

In this paper the observer-based solution to the problem  of the sine-Gordon model energy boundary control is proposed.
The Luenberger-type observer to evaluate velocities required for evaluation of the system energy is proposed. To design
the energy control algorithm the speed-gradient method \cite{FP98} is employed. To analyze the system well-posedness and
partial stability an energy-based Lyapunov functional is used. The closed loop dynamics are illustrated by simulation.

The paper is structured as follows. In Section~\ref{Sec_ProblemFormulation} the problem is formulated and necessary
notations and definitions are introduced. Control algorithm is designed in Section~\ref{Sec_ControlAlgorithm}.
Section~\ref{Sec_MainTheorems} is devoted to studying the properties of the designed control system. Numerical evaluation results
are presented in Section~\ref{Sec_Simulation}.

\section{Problem Formulation}
\label{Sec_ProblemFormulation}

Consider the one-dimensional sine-Gordon equation with the following initial and boundary conditions
\begin{gather} \label{SineGordon}
  z_{tt}(t, x) - k z_{xx}(t, x) + \beta \sin z(t, x) = 0, \quad t \ge 0, \quad x \in (0, 1) \\
  z(0, x) = z^0(x), \quad z_t(0, x) = z^1(x), \quad x \in (0, 1), \label{SineGordon1} \\
  z(t, 0) = 0, \quad  z_x(t, 1) = u(t), \quad t \ge 0, \label{SineGordon2} \\
  y(t) = z_t(t, 1) \quad t \ge 0, \label{SineGordon3}
\end{gather}
where $\beta > 0$ and $k > 0$ are given parameters, $u(t)$ is a control input, $y(t)$ is the output, 
and $z^0, z^1 \colon [0, 1] \to \mathbb{R}$ are given functions. Denote by
$$
  H(z) = \int_0^1 \left( \frac{z_t^2}{2} + k \frac{z_x^2}{2} + \beta (1 - \cos z) \right) \, dx
$$
the Hamiltonian for the sine-Gordon equation. One can easily verify that the Hamiltonian $H(z)$ is preserved
along solutions of the unforced system. Furthermore, $H(z)$ is a non-negative function, and $H(z) = 0$ if and only
if $z = 0$. Therefore $H(z(t))$ can be viewed as \textit{the energy} of a solution $z(t, x)$ of equation
\eqref{SineGordon} (or \textit{system's energy}) at time $t$.

We pose the following control problem: find a control law $u(t)$, which ensures the control objective
\begin{equation} \label{ControlGoal}
  H(z(t)) \to H^* \text{~as~} t \to + \infty,
\end{equation}
where, $z(t)$ is a solution of \eqref{SineGordon}--\eqref{SineGordon3}, and $H^* \ge 0$ is prespecified. Thus, 
the control objective is to reach the desired energy level $H^*$ in the system~\eqref{SineGordon}--\eqref{SineGordon3}.

\begin{remark}
It should be noted that all results below can be easily extended to the case
$$
  z_t(t, 1) = u(t), \quad y(t) = z_x(t, 1) \quad t \ge 0,
$$
(i.e. one can swap the input and the output), since the derivatives $z_t(t, 1)$ and $z_x(t, 1)$ enter all expressions
below almost identically with the only difference being the coefficient $k > 0$ before $z_x(t, 1)$.
\end{remark}

\section{Speed-Gradient Control Law and Luenberger Type Observer}
\label{Sec_ControlAlgorithm}

Let us utilize the Speed-Gradient algorithm in order to design a control law solving the boundary energy control problem
posed above. Introduce the goal function
$$
  Q(z) = \frac{1}{2} \big( H(z) - H^* \big)
$$
that measures the difference between the current and the desired energies. The derivative of this function along
solutions of the system \eqref{SineGordon}--\eqref{SineGordon3} has the form
$$
  \frac{d}{dt} Q(z(t)) = (H(z(t)) - H^*) \int_0^1 \Big( z_t z_{tt} + k z_x z_{xt} + \beta \sin z z_t \Big) \, dx.
$$
Substituting $z_{tt}$ for $k z_{xx} - \beta \sin z$ (recall that $z(t)$ is a solution of \eqref{SineGordon}), and
integrating the term $z_x z_{xt}$ by parts one obtains
\begin{multline*}
  \frac{d}{dt} Q(z(t)) = (H(z(t)) - H^*)
  \Bigg[ \int_0^1 \big( z_t (k z_{xx} - \beta \sin z) - k z_{xx} z_{t} + \beta \sin z z_t \big) \, dx \\
  + k z_x(t, 1) z_t(t, 1) - k z_x(t, 0) z_t(t, 0) \Bigg].
\end{multline*}
Hence with the use of the boundary condition $z(t, 0) = 0$ one finally gets that
$$
  \frac{d}{dt} Q(z(t)) = k (H(z(t)) - H^*) u(t) y(t)
$$
Then according to the Speed-Gradient algorithm one defines the control law as follows:
$$
  u_0(t) = - \gamma \frac{\partial}{\partial u} \frac{d Q(z)}{dt} = - \gamma (H(z(t)) - H^*) k y(t),
$$
where $\gamma > 0$ is a scalar gain. Below, we consider the more general control algorithm of the form
\begin{equation}	\label{PreliminaryControlLaw}
  u_0(t) = - \gamma \psi(H(z(t)) - H^*) y(t),
\end{equation}
where $\psi \colon \mathbb{R} \to \mathbb{R}$ is a continuous function such that $\psi(0) = 0$ and $\psi(s) s > 0$ for
any $s \ne 0$. 

Note that control law \eqref{PreliminaryControlLaw} defined with the use of the Speed-Gradient algorithm depends on the
current energy of the system $H(z(t))$ that is not available from the measurements. Therefore we need to design an
observer. Being inspired by the ideas of \cite{Fridman2016}, we propose a Luenberger-type observer of the form:
\begin{gather} \label{Observer}
  \widehat{z}_{tt}(t, x) - k \widehat{z}_{xx}(t, x) + \beta \sin \widehat{z}(t, x) = 0, 
  \quad t \ge 0, \quad x \in (0, 1) \\
  \widehat{z}(0, x) = \widehat{z}^0(x), \quad \widehat{z}_t(0, x) = \widehat{z}^1(x), 
  \quad x \in (0, 1), \label{Observer1} \\
  \widehat{z}(t, 0) = 0, \quad  
  \widehat{z}_x(t, 1) = u(t) + \alpha\big( y(t) - \widehat{z}_t(t, 1) \big), 
  \quad t \ge 0, \label{Observer2}
\end{gather}
where $\widehat{z}^0, \widehat{z}^1 \colon [0, 1] \to \mathbb{R}$ are given functions, and $\alpha > 0$ is an observer
gain. Now, with the use of the above observer we define the control law as follows:
\begin{equation} \label{ControlLaw}
  u(t) = - \gamma \psi\Big( H(\widehat{z}(t)) - H^* \Big) y(t).
\end{equation}
In the following section, we demonstrate that under some additional assumptions control law \eqref{ControlLaw} solves
the energy control problem \eqref{ControlGoal}.

\begin{remark}
Let us note that the observer \eqref{Observer}--\eqref{Observer2} can be designed with the use of the Speed-Gradient
algorithm as well. Namely, consider the model of the system \eqref{SineGordon}--\eqref{SineGordon3} of the form
\begin{gather*} 
  \widehat{z}_{tt}(t, x) - k \widehat{z}_{xx}(t, x) + \beta \sin \widehat{z}(t, x) = 0, 
  \quad t \ge 0, \quad x \in (0, 1) \\
  \widehat{z}(0, x) = \widehat{z}^0(x), \quad \widehat{z}_t(0, x) = \widehat{z}^1(x), 
  \quad x \in (0, 1), \\
  \widehat{z}(t, 0) = 0, \quad  
  \widehat{z}_x(t, 1) = u(t) + \widehat{u}(t), \quad t \ge 0, \\
  \widehat{y}(t) = \widehat{z}_t(t, 1).
  \quad t \ge 0,
\end{gather*}
where $\widehat{u}(t)$ is a control input, and $\widehat{y}(t)$ is the output of the model. Define the goal function 
$$
  \widehat{Q}(\widehat{z}) = \int_0^1 
  \left( \frac{(z_t - \widehat{z}_t)^2}{2} + k \frac{(z_x - \widehat{z}_x)^2}{2} \right) \, dx,
$$
i.e. the goal function is a weighted estimation error. Then one can easily check that applying the Speed-Gradient
algorithm in this case we arrive at the control law $\widehat{u}(t) = \alpha (y(t) - \widehat{y}(t))$, which coincides
with the one used in \eqref{Observer2}.
\end{remark}

\section{Properties of the Designed Control System}
\label{Sec_MainTheorems}

Let us study the performance of the system \eqref{SineGordon}--\eqref{SineGordon3} with control law \eqref{ControlLaw}
and observer \eqref{Observer}--\eqref{Observer2}. The closed-loop system has the from
\begin{gather} \label{CLSyst_1}
  z_{tt}(t, x) - k z_{xx}(t, x) + \beta \sin z(t, x) = 0, \quad t \ge 0, \quad x \in (0, 1) \\
  \widehat{z}_{tt}(t, x) - k \widehat{z}_{xx}(t, x) + \beta \sin \widehat{z}(t, x) = 0, 
  \quad t \ge 0, \quad x \in (0, 1) \\
  z(0, x) = z^0(x), \quad z_t(0, x) = z^1(x), \quad x \in (0, 1), \\
  \widehat{z}(0, x) = \widehat{z}^0(x), \quad \widehat{z}_t(0, x) = \widehat{z}^1(x), 
  \quad x \in (0, 1), \\
  z(t, 0) = 0, \quad  z_x(t, 1) = u(t), \quad t \ge 0, \label{CLSyst_BoundaryCond} \\
  \widehat{z}(t, 0) = 0, \quad  
  \widehat{z}_x(t, 1) = u(t) + \alpha\big( z_t(t, 1) - \widehat{z}_t(t, 1) \big), 
  \quad t \ge 0, \\
  u(t) = - \gamma \psi\Big( H(\widehat{z}(t)) - H^* \Big) z_t(t, 1) 
  \quad t \ge 0. \label{CLSyst_2} 
\end{gather}
Let us introduce an assumption on the well-posedness of this system.

\begin{assumption} \label{Assumpt_WellPosedness}
There exists a nonempty set $\mathcal{W}_0 \subseteq W^2_2(0, 1) \times W^1_2(0, 1)$ of ``sufficiently smooth'' initial
data such that for any $(z^0, z^1) \in \mathcal{W}_0$ and $(\widehat{z}^0, \widehat{z}^1) \in \mathcal{W}_0$ there
exists a unique ``sufficiently regular'' solution $(z(t, x), \widehat{z}(t, x))$ of the system
\eqref{CLSyst_1}--\eqref{CLSyst_2} such that
\begin{enumerate}
\item{$(z(t), \widehat{z}(t))$ is defined on a maximal interval of existence $[0, T_{\max})$, and if
$T_{\max} < + \infty$, then $H(z(t)) + H(\widehat{z}(t)) \to + \infty$ as $t \to T_{\max}$,
}

\item{the functions $H(z(\cdot))$ and $H(\widehat{z}(\cdot))$ are locally absolutely continuous.
}
\end{enumerate}
\end{assumption}

\begin{remark}
Note that for the main results of this paper to hold true it is sufficient (but not necessary) to suppose that 
$$
  z, \widehat{z} \in C\Big( [0, T_{\max}); W^2_2(0, 1) \Big) \cap C^1\Big( [0, T_{\max}); W^1_2(0, 1) \Big) \cap
  C^2\Big( [0, T_{\max}); L_2(0, 1) \Big),
$$
where, as in the assumption above, $W^m_p(0, 1)$ is the Sobolev space.
\end{remark}

\begin{remark}
Let us point out the difficulties in the proof of existence and uniqueness theorem for the initial-boundary value
problem \eqref{CLSyst_1}--\eqref{CLSyst_2}. At first, note that this problem cannot be rewritten (without some
nontrivial trasformations) as a Lipschitz perturbation of a linear evolution equation of the form
$$
  \frac{d w}{dt}  = \mathcal{A} w + f(w),
$$
where $\mathcal{A}$ is an unbounded linear operator in a Banach space $X$. The interested reader can check that
regardless of the choice of the space $X$ either the operator $\mathcal{A}$ is not an infinitesimal generator of a
$C_0$-semigroup or the nonlinear operator $f$ is not locally Lipschitz continuous or $f$ is not defined on the entire
space $X$ (or not in the domain of $\mathcal{A}$, so that such results as \cite{Pazy}, Theorem~6.1.7 are not
applicable). One can consider the problem \eqref{CLSyst_1}--\eqref{CLSyst_2} as a nonlinear evolution equation, but
the corresponding nonlinear operator does not possess any standard properties. It is neither accretive (dissipative) nor
compact. Similarly, all other general methods for proving the existence of solutions of nonlinear hyperbolic partial
differential equations known to the authors cannot be directly applied to the problem under consideration. Therefore we
pose the above assumption on the well-posedness of the system \eqref{CLSyst_1}--\eqref{CLSyst_2} as a challenging
problem for future research. It should be noted that the main difficulty in a proof of this assumption consists in the
fact that dynamic boundary conditions \eqref{CLSyst_BoundaryCond}--\eqref{CLSyst_2} are nondissipative and
\textit{nonlinearly} depend on the derivative $z_t(t, 1)$.
\end{remark}

\subsection{Exponential Decay of the Estimation Error}

We start our analysis of the control law \eqref{ControlLaw} by showing that under some additional assumptions 
the equation for the estimation error $e = z - \widehat{z}$ is globally exponentially stable.

Observe that the function $e(t, x)$ is a solution of the following boundary value problem:
\begin{gather} \label{EstimError}
  e_{tt}(t, x) - k e_{xx}(t, x) + \beta \Big( \sin z(t, x) - \sin \widehat{z}(t, x) \Big) = 0, \\
  e(0, x) = z^0(x) - \widehat{z}^0(x), \quad e_t(0, x) = z^1(x) - \widehat{z}^1(x), \label{EstimError1} \\
  e(t, 0) = 0, \quad  e_x(t, 1) = - \alpha e_t(t, 1), \label{EstimError2}
\end{gather}
where $t \ge 0$ and $x \in [0, 1]$. Denote by
$$
  E(t) = \frac{1}{2} \int_0^1 \Big( e_t^2 + k e_x^2 \Big) \, dx
$$
the weighted quadratic error. Our aim is to show that $E(t)$ decays exponentially, provided the parameters $k$ and
$\beta$ satisfy certain conditions. In order to conveniently express these conditions, denote
\begin{equation} \label{EtaDef}
  \eta(\beta, k) = \max\left\{ \beta, \frac{4 \beta}{\pi^2 k - (\pi^2 + 4) \beta}\right\},
\end{equation}
and introduce the following assumption.

\begin{assumption} \label{Assumpt_AdmissibleParam}
The parameters $k > 0$ and $\beta > 0$ of the system \eqref{SineGordon}--\eqref{SineGordon2} satisfy the following
inequalities:
$$
  \left( 1 + \frac{4}{\pi^2} \right) \beta < k, \quad \eta(\beta, k) < \min\big\{  1, k \big\}.
$$
\end{assumption}

One can easily verify that assumption~\ref{Assumpt_AdmissibleParam} is valid iff
$$
  \begin{cases}
    \beta < 1, & \text{in the case } k > \dfrac{\pi^2 + 8}{\pi^2} \approx 1.81, \\
    \beta < \dfrac{\pi^2 k}{\pi^2 + 8}, & \text{in the case } 1 \le k \le \dfrac{\pi^2 + 8}{\pi^2}, \\
    \beta < \dfrac{\pi^2 k^2}{(\pi^2 + 4)k + 4}, & \text{in the case } 0 < k < 1.
  \end{cases}
$$
Note that for assumption~\ref{Assumpt_AdmissibleParam} to hold true it is necessary that $\beta < 1$.

\begin{theorem} \label{Th_Observer}
Let Assumptions \ref{Assumpt_WellPosedness} and \ref{Assumpt_AdmissibleParam} be valid. Then for any $\varepsilon > 0$
such that $\eta(\beta, k) < \varepsilon < \min\{ 1, k \}$, and for all $\alpha > 0$ satisfying the inequality
\begin{equation} \label{AdmissibleObserverGain}
  \frac{\varepsilon k}{2} \alpha^2 - k \alpha + \frac{\varepsilon}{2} \le 0
\end{equation}
there exist $\delta > 0$ and $M > 0$ such that for any initial conditions $(z^0, z^1) \in \mathcal{W}_0$ and
$(\widehat{z}^0, \widehat{z}^1) \in \mathcal{W}_0$, and for all $\gamma > 0$ one has 
$$
  E(t) \le M E(0) e^{-\delta t} \quad \forall t \in [0, T_{\max}).
$$
\end{theorem}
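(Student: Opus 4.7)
The plan is to build a Lyapunov functional equivalent to $E(t)$ whose time derivative is bounded above by a negative multiple of itself, using the classical multiplier method for wave equations. Specifically, I introduce the auxiliary term
\[
  F(t) = \int_0^1 x\, e_x(t,x)\, e_t(t,x)\,dx
\]
and set $V(t) = E(t) + \varepsilon F(t)$. The weighted Young inequality $|e_t e_x| \le (e_t^2 + k e_x^2)/(2\sqrt{k})$ gives $|F(t)| \le E(t)/\sqrt{k}$; combined with $\varepsilon < \min\{1,k\}$ this yields $\varepsilon/\sqrt{k} < 1$, so $V$ and $E$ are equivalent. The multiplicative constant $\varepsilon$ in front of $F$ is not arbitrary: it is the unique choice that makes the residual boundary coefficient coincide with the quadratic in \eqref{AdmissibleObserverGain}.

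Next I differentiate along trajectories of \eqref{EstimError}--\eqref{EstimError2}. For $\dot E$, substitute $e_{tt}=k e_{xx} - \beta(\sin z - \sin\widehat{z})$, integrate $k e_t e_{xx}$ by parts in $x$, and use $e(t,0)=0$ (so $e_t(t,0)=0$) together with $e_x(t,1)=-\alpha e_t(t,1)$ to get
\[
  \dot E(t) = -k\alpha\, e_t(t,1)^2 \;-\; \beta\!\int_0^1 e_t\,(\sin z - \sin\widehat{z})\,dx.
\]
The multiplier computation (integrating $\int x e_{xt}e_t\,dx$ and $\int x e_x\cdot k e_{xx}\,dx$ by parts in $x$) and the same boundary conditions give
\[
  \dot F(t) = \tfrac{1+k\alpha^2}{2}\, e_t(t,1)^2 \;-\; E(t) \;-\; \beta\!\int_0^1 x\, e_x\,(\sin z - \sin\widehat{z})\,dx.
\]
Adding $\varepsilon$ times the second to the first, the boundary coefficient becomes precisely $\tfrac{\varepsilon k}{2}\alpha^2 - k\alpha + \tfrac{\varepsilon}{2}$, which is $\le 0$ by \eqref{AdmissibleObserverGain}. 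What remains is $\dot V(t) \le -\varepsilon E(t) - \beta\int_0^1(e_t+\varepsilon x e_x)(\sin z - \sin\widehat{z})\,dx$.

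The last step is to absorb the nonlinear integral into $-\varepsilon E$. Using $|\sin z - \sin\widehat{z}|\le |e|$ and the elementary Young bound $2|ab|\le a^2+b^2$, I estimate $\beta|e_t e|\le \tfrac{\beta}{2}(e_t^2+e^2)$ and $\beta\varepsilon|x e_x e|\le \tfrac{\beta\varepsilon}{2}(e_x^2+e^2)$, and then apply the sharp Wirtinger inequality $\|e\|_{L^2}^2 \le (4/\pi^2)\|e_x\|_{L^2}^2$, which holds because $e(t,0)=0$. Collecting terms,
\[
  \dot V(t) \le -\tfrac{\varepsilon-\beta}{2}\|e_t\|^2 - \Bigl[\tfrac{\varepsilon k}{2} - \tfrac{2\beta}{\pi^2} - \tfrac{\beta\varepsilon(\pi^2+4)}{2\pi^2}\Bigr]\|e_x\|^2.
\]
The first bracket is positive by $\varepsilon>\beta$, and the second by $\varepsilon>4\beta/[\pi^2 k-(\pi^2+4)\beta]$; both are exactly the two branches of $\varepsilon>\eta(\beta,k)$ in \eqref{EtaDef}. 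Hence $\dot V \le -c_0 E \le -\delta V$ for explicit constants, and Grönwall yields $V(t)\le V(0)e^{-\delta t}$, which combined with $V\sim E$ gives the claimed bound $E(t)\le M E(0) e^{-\delta t}$.

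The main obstacle is not the algebra but the rigorous justification of the computations of $\dot E$ and $\dot F$ for the solution class postulated by Assumption~\ref{Assumpt_WellPosedness}: item~(2) of the assumption only asserts absolute continuity of $H(z(\cdot))$, while the manipulation of $F$ requires trace values $e_t(t,1)$ and the mixed derivative $e_{xt}$, so one is forced to invoke the stronger regularity indicated in the remark following the assumption. A secondary remark is that the derived estimate keeps $E(t)$ bounded but does \emph{not} by itself guarantee $T_{\max}=+\infty$, since the blow-up alternative still allows $H(z(t))+H(\widehat z(t))$ to escape along the full closed-loop dynamics; this is why the conclusion is stated on the interval $[0,T_{\max})$.
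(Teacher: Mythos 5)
Your proposal is correct and follows essentially the same route as the paper's proof: the same Lyapunov functional $V=E+\varepsilon\int_0^1 x\,e_t e_x\,dx$, the same multiplier computations and boundary-term bookkeeping yielding exactly the coefficient in \eqref{AdmissibleObserverGain}, the same Young/Wirtinger estimates producing the coefficients $-\tfrac{\varepsilon-\beta}{2}$ and $-\bigl[\tfrac{\varepsilon k}{2}-\tfrac{2\beta}{\pi^2}-\tfrac{\beta\varepsilon(\pi^2+4)}{2\pi^2}\bigr]$, and the same Gr\"onwall conclusion. The only (cosmetic) differences are your slightly sharper equivalence constant $1/\sqrt{k}$ in place of $\max\{1,1/k\}$ and your presentation of $\dot F$ as an exact identity before estimating; your closing remarks on regularity and on the conclusion holding only on $[0,T_{\max})$ are also consistent with the paper.
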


\begin{proof}
For any $\varepsilon > 0$ introduce the Lyapunov function
$$
  V(t) = E(t) + \varepsilon \int_0^1 x e_t e_x \, dx.
$$
(cf.~\cite{Kobayashi04}, Section~4; \cite{Fridman2016}, Section~2.2). From the inequalities
$$
  \Big| \int_0^1 x e_t e_x \, dx \Big| \le \int_0^1 |e_t| |e_x| \, dx \le
  \frac{1}{2} \int_0^1 e_t^2 \, dx + \frac{1}{2} \int_0^1 e_x^2 \, dx \le 
  \max\left\{ 1, \frac{1}{k} \right\} E(t),
$$
it follows that for any $\varepsilon < \min\{ 1, k \}$ one has
\begin{equation} \label{ErrorLyapFuncEstimates}
  0 \le (1 - k_0 \varepsilon) E(t) \le V(t) \le (1 + k_0 \varepsilon) E(t), \quad
  (1 - k_0 \varepsilon) > 0,
\end{equation}
where $k_0 = \max\{ 1, 1 / k \}$. Thus, in particular, for any $0 < \varepsilon < \min\{ 1, k \}$ one has 
$V(t) \ge 0$.

For any $t \in [0, T_{\max})$ one has
\begin{multline} \label{ErrorLyapFuncDer}
  \frac{d}{dt} V(t) = \frac{d}{dt} E(t) + \varepsilon \frac{d}{dt} \int_0^1 x e_t e_x \, dx \\
  = \int_0^1 \Big( e_t e_{tt} + k e_{x} e_{tx} \Big) \, dx +
  \varepsilon \int_0^1 \Big( x (e_{tt} e_x + e_t e_{tx}) \Big) \, dx.
\end{multline}
Taking into account \eqref{EstimError} and \eqref{EstimError2}, and integrating by parts one obtains that
\begin{multline*}
  \frac{d}{dt} E(t) = \int_0^1 \Big( e_t e_{tt} + k e_{x} e_{tx} \Big) \, dx = 
  \int_0^1 \Big( e_t \big( k e_{xx} - \beta \sin z + \beta \sin \widehat{z} \big) - k e_t e_{xx} \Big) \, dx \\
  + k e_x(t, 1) e_t(t, 1) - k e_x(t, 0) e_t(t, 0) = - \alpha k e_t(t, 1)^2 +
  \beta \int_0^1 e_t \big( \sin \widehat{z} - \sin z \big) \, dx.
\end{multline*}
Applying the fact that the function $y \to \sin y$ is globally Lipschitz continuous with the Lipschitz constant $L = 1$
one gets that
$$
  \Big| \int_0^1 e_t \big( \sin \widehat{z} - \sin z \big) \, dx \Big| \le \int_0^1 |e_t| |e| \, dx \le
  \frac{1}{2} \int_0^1 e_t^2 \, dx + \frac{1}{2} \int_0^1 e^2 \, dx.
$$
Hence with the use of Wirtinger's inequality (see, e.g.,~\cite{Hardy}) one obtains that
\begin{equation} \label{ErrorLyapFuncDer_1}
  \frac{d}{dt} E(t) \le - \alpha k e_t(t, 1)^2 + \frac{\beta}{2} \int_0^1 e_t^2 \, dx +
  \frac{2 \beta}{\pi^2} \int_0^1 e_x^2 \, dx.
\end{equation}
Now, let us consider the second term in \eqref{ErrorLyapFuncDer}. At first, note that
\begin{equation} \label{ErrorLyapFuncDer_2}
  \int_0^1 x e_t e_{tx} \, dx = \frac{1}{2}\int_0^1 \Big( \frac{d}{dx} \big( x e_t^2 \big) - e_t^2 \Big) \, dx =
  \frac{1}{2} e_t(t, 1)^2 - \frac{1}{2} \int_0^1 e_t^2 \, dx
\end{equation}
At second, applying \eqref{EstimError} and \eqref{EstimError2} one finds that
\begin{multline} \label{ErrorLyapFuncDer_3}
  \int_0^1 x e_x e_{tt} \, dx = \int_0^1 x e_x \Big( k e_{xx} - \beta \sin z + \beta \sin \widehat{z} \Big) \, dx \\
  \le \frac{k}{2} \int_0^1 \Big( \frac{d}{dx} \big( x e_x^2 \big) - e_x^2 \Big) \, dx +
  \beta \int_0^1 x |e_x| |e| \, dx \\
  \le - \frac{k}{2} \int_0^1 e_x^2 \, dx + \frac{k}{2} e_x^2(t, 1) + 
  \frac{\beta}{2} \int_0^1 e_x^2 \, dx + \frac{\beta}{2} \int_0^1 e^2 \, dx \\
  \le \left(- \frac{k}{2} + \frac{\beta}{2} + \frac{2 \beta}{\pi^2} \right) \int_0^1 e_x^2 \, dx + 
  \frac{k}{2} \alpha^2 e_t^2 (t, 1).
\end{multline}
Combining \eqref{ErrorLyapFuncDer}--\eqref{ErrorLyapFuncDer_3} one gets that for any $t \in [0, T_{\max})$ the following
inequality holds true
\begin{multline*}
  \frac{d}{dt} V(t) \le \left( - \frac{\varepsilon}{2} + \frac{\beta}{2} \right) \int_0^1 e_t^2 \, dx +
  \left( - \frac{\varepsilon k}{2} + \frac{2 \beta}{\pi^2} + \frac{\varepsilon \beta}{2} + 
  \frac{2 \varepsilon \beta}{\pi^2} \right) \int_0^1 e_x^2 \, dx \\
  + \Big( - \alpha k + \frac{\varepsilon}{2} + \frac{\varepsilon k}{2} \alpha^2 \Big) e_t^2(t, 1).
\end{multline*}
Denote
\begin{gather} \label{LinearInequalities}
  \delta_1(\varepsilon) = - \frac{\varepsilon}{2} + \frac{\beta}{2}, \quad
  \delta_2(\varepsilon) = - \frac{\varepsilon k}{2} + \frac{2 \beta}{\pi^2} + \frac{\varepsilon \beta}{2} + 
  \frac{2 \varepsilon \beta}{\pi^2}, \\
  \delta_3(\varepsilon, \alpha) = - \alpha k + \frac{\varepsilon}{2} + \frac{\varepsilon k}{2} \alpha^2.
  \label{LinearInequalities_2}
\end{gather}
Then the above inequality can be rewritten as follows
$$
  \frac{d}{dt} V(t) \le \delta_1(\varepsilon) \int_0^1 e_t^2 \, dx + 
  \delta_2(\varepsilon) \int_0^1 e_x^2 \, dx + \delta_3(\varepsilon, \alpha) e_t^2(t, 1).
$$
Our aim is to show that under the assumptions of the theorem for any $\varepsilon > 0$ and $\alpha > 0$ satisfying the
inequalities
$$
  \eta(\beta, k) < \varepsilon < \min\{ 1, k \}, \quad
  \delta_3(\varepsilon, \alpha) := \frac{\varepsilon k}{2} \alpha^2 - k \alpha + \frac{\varepsilon}{2} \le 0
$$
one has $\delta_1(\varepsilon) < 0$ and $\delta_2(\varepsilon) < 0$. Then for any such $\varepsilon$ and $\alpha$ one
has
$$
  \frac{d}{dt} V(t) \le - \min\left\{ 2 |\delta_1(\varepsilon)|, \frac{2|\delta_2(\varepsilon)|}{k} \right\} E(t).
$$
Consequently, applying \eqref{ErrorLyapFuncEstimates} one gets that
$$
  \frac{d}{dt} V(t) \le - \delta(\varepsilon) V(t), \quad
  \delta(\varepsilon) = \min\left\{ \frac{2|\delta_1(\varepsilon)|}{1 + k_0 \varepsilon}, 
  \frac{2|\delta_2(\varepsilon)|}{k(1 + k_0 \varepsilon)} \right\} > 0,
$$
which yeilds $V(t) \le V(0) e^{- \delta(\varepsilon) t}$ for all $t \in [0, T_{\max})$. Hence and from
\eqref{ErrorLyapFuncEstimates} it follows that
\begin{equation} \label{ExpDecayOfWeightedQuadError}
  E(t) \le \frac{1 + k_0 \varepsilon}{1 - k_0 \varepsilon} E(0) e^{- \delta(\varepsilon) t} 
  \quad \forall t \in [0, T_{\max}),
\end{equation}
which implies the required result.

Thus, it remains to show that for any $\varepsilon > 0$ and $\alpha > 0$ such that
$$
  \eta(\beta, k) < \varepsilon < \min\{ 1, k \}, \quad \delta_3(\varepsilon, \alpha) \le 0
$$
one has $\delta_1(\varepsilon) < 0$ and $\delta_2(\varepsilon) < 0$. From \eqref{LinearInequalities} it follows that
$\varepsilon \in (0, \min\{ 1, k \})$ such that $\delta_1(\varepsilon) < 0$ and $\delta_2(\varepsilon) < 0$ exists if
and only if
$$
  \min\{ 1, k \} > \beta, \quad
  - k + \left( 1 + \frac{4}{\pi^2} \right) \beta < 0, \quad
  \min\{ 1, k \} > \frac{4 \beta}{\pi^2 k - (\pi^2 + 4) \beta}
$$
or, equivalently, $( 1 + 4 / \pi^2 ) \beta < k$ and $\eta(\beta, k) < \min\{ 1, k \}$ (see \eqref{EtaDef}). The above
inequalities are valid due to assumption~\ref{Assumpt_AdmissibleParam}. Therefore for any $\varepsilon > 0$ such that
$\eta(\beta, k) < \varepsilon < \min\{ 1, k \}$ one has $\delta_1(\varepsilon) < 0$  and $\delta_2(\varepsilon) < 0$.
Choosing $\alpha > 0$ such that $\delta_3(\varepsilon, \alpha) \le 0$ (note that such $\alpha$ exists, if 
$\varepsilon < k$; see~\eqref{AdmissibleObserverGain}) one obtains the required result.
\end{proof}

\begin{remark}
The same observer as \eqref{Observer}--\eqref{Observer2} was studied in \cite{Fridman2016} for the
uncontrolled system of the form \eqref{SineGordon}--\eqref{SineGordon3}. It should be noted that sufficient conditions
for the exponential decay of the weighted quadratic error $E(t)$ were formulated in \cite{Fridman2016} in terms of the
feasibility of certain LMIs simultaneously depending on system's parameters $k$ and $\beta$, and the observer gain
$\alpha$. In contrast, in this paper \textit{explicit} inequalities on system's parameters $k$ and $\beta$ and the
observer gain $\alpha$ ensuring the exponential decay of $E(t)$ are obtained. The region of admissible (i.e. satisfying
assumption~\ref{Assumpt_AdmissibleParam}) parameters $k$ and $\beta$ is shown on Fig.~\ref{Fig_AdmissibleRegion}.
\begin{figure}[htpb]
\centering
\includegraphics[width=0.9\linewidth]{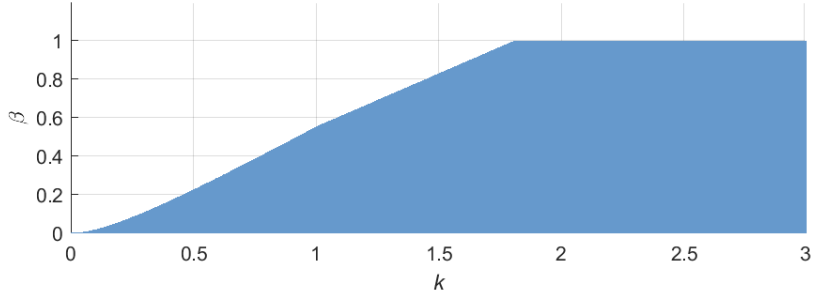}
\caption{The region of admissible parameters $k$ and $\beta$.}
\label{Fig_AdmissibleRegion}
\end{figure}
\noindent{}Note that for any $k$ and $\beta$ from this region one can choose the observer gain $\alpha$ for
which the weighted quadratic error $E(t)$ decays exponentially.
\end{remark}

\subsection{Global Well-Posedness of the Closed-Loop System}

At the second step, let us show that a solution $(z(t), \widehat{z}(t))$ of the system
\eqref{CLSyst_1}--\eqref{CLSyst_2} is defined on $[0, + \infty)$, i.e. $T_{\max} = + \infty$. With the use of
this result we will show that $|H(\widehat{z}(t)) - H(z(t))| \to 0$ as $t \to \infty$.

\begin{theorem} \label{Thr_GlobalWellPosed}
Let Assumptions \ref{Assumpt_WellPosedness} and \ref{Assumpt_AdmissibleParam} be valid. Then for any $\varepsilon > 0$
such that $\eta(\beta, k) < \varepsilon < \min\{ 1, k \}$, for all $\alpha > 0$ satisfying
\eqref{AdmissibleObserverGain}, for all $\gamma > 0$, and for any initial conditions $(z^0, z^1) \in \mathcal{W}_0$ and
$(\widehat{z}^0, \widehat{z}^1) \in \mathcal{W}_0$ there exists $H_{\max} > 0$ such that 
\begin{equation} \label{SolutBounded}
  H(z(t)) \le H_{\max}, \quad H(\widehat{z}(t)) \le H_{\max} 
  \quad \forall t \in [0, T_{\max}),
\end{equation}
which implies that the system is forward complete $(\text{i.e.~} T_{\max} = + \infty)$.
\end{theorem}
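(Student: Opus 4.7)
The plan is to prove boundedness of $H(z(t))$ by a first-passage continuity argument and then deduce boundedness of $H(\widehat z(t))$ from an elementary triangle-type estimate; the exponential decay provided by Theorem~\ref{Th_Observer} enters only through the uniform bound $E(t) \le M E(0)$.

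First, differentiating the physical energy along solutions (as in Section~\ref{Sec_ControlAlgorithm}) gives
\[
  \frac{d}{dt} H(z(t)) = -\gamma k \psi\bigl(H(\widehat z(t)) - H^*\bigr) z_t(t,1)^2,
\]
which, since $\psi(s)s > 0$ for $s \neq 0$, is non-positive whenever $H(\widehat z(t)) \ge H^*$. Hence $H(z(\cdot))$ can only strictly grow at instants where $H(\widehat z(\cdot)) < H^*$. Next I would quantify the gap between the true and estimated energies: factoring $z_t^2 - \widehat z_t^2 = e_t(z_t + \widehat z_t)$ and $z_x^2 - \widehat z_x^2 = e_x(z_x + \widehat z_x)$, applying Cauchy--Schwarz, and bounding the nonlinear part via $|\cos \widehat z - \cos z| \le |e|$ together with Wirtinger's inequality, I obtain
\[
  \bigl| H(z(t)) - H(\widehat z(t)) \bigr| \le C_1 \sqrt{E(t)\bigl(H(z(t)) + H(\widehat z(t))\bigr)} + C_2 \sqrt{E(t)},
\]
with $C_1, C_2$ depending only on $k, \beta$. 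Theorem~\ref{Th_Observer} then replaces $E(t)$ by the constant $M E(0)$ uniformly in $t \in [0, T_{\max})$.

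The third step is a first-passage argument. Suppose, for contradiction, that $H(z(\cdot))$ is unbounded on $[0, T_{\max})$. Pick $R > H(z^0)$ and let $t_R \in (0, T_{\max})$ be the first time $H(z(t_R)) = R$. Absolute continuity of $H(z(\cdot))$ and the strict increment $H(z(t_R)) - H(z(s)) > 0$ for $s < t_R$ close to $t_R$ imply that in every left neighbourhood $(t_R - \delta, t_R)$ there is a positive-measure set on which $\frac{d}{dt} H(z) > 0$, and therefore on which $H(\widehat z) < H^*$ by step~1. Inserting $H(\widehat z) < H^*$ into the step-2 estimate and using $H(z(s)) \le H^* + |H(z(s)) - H(\widehat z(s))|$ yields, on that set,
\[
  H(z(s)) \le H^* + C_1 \sqrt{M E(0)\bigl(H(z(s)) + H^*\bigr)} + C_2 \sqrt{M E(0)}.
\]
Picking $s_\delta$ in the set and letting $\delta \to 0$, continuity of $H(z(\cdot))$ gives $H(z(s_\delta)) \to R$, so $R$ satisfies the same scalar inequality. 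As a quadratic in $\sqrt{R}$ this forces a bound on $R$ independent of $R$---a contradiction for sufficiently large $R$. Hence $H(z(t)) \le H_{\max}^{(z)}$ uniformly.

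Finally, $(a - b)^2 \le 2a^2 + 2b^2$ and $0 \le 1 - \cos \le 2$ give $H(\widehat z(t)) \le 2 H(z(t)) + 2 E(t) + 2\beta$, which together with the previous bound controls $H(\widehat z)$ uniformly. Setting $H_{\max}$ equal to the larger of the two bounds gives \eqref{SolutBounded}, and part~1 of Assumption~\ref{Assumpt_WellPosedness} then forces $T_{\max} = +\infty$. The main obstacle is the step-2 estimate: the $L^2$-norms must be distributed so that the decaying factor $\sqrt{E(t)}$ multiplies $\sqrt{H(z) + H(\widehat z)}$ rather than itself; any looser split would yield only $O(E(t))$ control in the first-passage inequality, too weak to bound $R$. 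With the correct scaling in hand, the remaining argument is scalar algebra.
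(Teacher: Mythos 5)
Your proof is correct, but it is organized quite differently from the paper's. The paper first derives the crude two--sided affine bounds $H(z(t)) \le C_1 + C_2 H(\widehat z(t))$ and $H(\widehat z(t)) \le C_1 + C_2 H(z(t))$ (with $C_2 = 4$, coming from $\| z_t \| \le \| e_t \| + \| \widehat z_t \|$ squared and $E(t) \le M E(0)$), and then runs a case analysis at an arbitrary $t_0$ based on the \emph{last} time $\tau$ at which $H(\widehat z)$ crossed the level $H^*$: on $[\tau, t_0]$ the sign of $\psi$ makes $H(z(\cdot))$ non-increasing, so $H(z(t_0)) \le H(z(\tau)) \le C_1 + C_2 H^*$, and the affine bounds transfer this to $H(\widehat z(t_0))$. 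You instead run a first-passage contradiction on $H(z)$ itself, extracting from absolute continuity a positive-measure set near the first hitting time of level $R$ on which $\tfrac{d}{dt}H(z) > 0$, hence $H(\widehat z) < H^*$, and closing with a scalar inequality in $R$. Both arguments are sound and rest on the same two ingredients (the sign of $\tfrac{d}{dt}H(z)$ via $\psi$, and a comparison of the two energies through $E(t) \le M E(0)$). One remark: your closing claim that the sharper additive estimate $|H(z) - H(\widehat z)| \le C_1\sqrt{E\,(H(z)+H(\widehat z))} + C_2\sqrt{E}$ is \emph{essential} is not accurate. On the positive-measure set you already know $H(\widehat z(s)) < H^*$ is an absolute bound, so the paper's cruder multiplicative estimate $H(z(s)) \le C_1 + 4 H(\widehat z(s)) < C_1 + 4H^*$ would close your first-passage argument just as well (and avoids the limit $s_\delta \to t_R$ in the quadratic inequality). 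Your refinement is not wrong, merely unnecessary; its real virtue is that, unlike the paper's Corollary~\ref{Crlr_EnergyObserver}, it is derived without first knowing $H_{\max}$, which is what keeps your argument non-circular.
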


\begin{proof}
Denote by $\| \cdot \|$ the standard norm in $L_2(0, 1)$, and let $(z(t), \widehat{z}(t))$ be a solution of
\eqref{CLSyst_1}--\eqref{CLSyst_2}. For any $t \in [0, T_{\max})$ one has
$$
  \| z_t(t) \| \le \| z_t(t) - \widehat{z}_t(t) \| + \| \widehat{z}_t(t) \| \le
  \sqrt{2 E(t)} + \sqrt{2 H(\widehat{z}(t))}.
$$
Hence and from Theorem~\ref{Th_Observer} it follows that there exist $M > 0$ and $\delta > 0$ such that for any
$t \in [0, T_{\max})$ the following inequality holds true:
$$
  \| z_t(t) \|^2 \le 4 E(t) + 4 H(\widehat{z}(t)) \le 4 M E(0) e^{- \delta t} + 4 H(\widehat{z}(t)).
$$
Similarly, for any $t \in [0, T_{\max})$ one has
$$
  \| z_x(t) \|^2 \le \frac{4}{k} M E(0) e^{- \delta t} + \frac{4}{k} H(\widehat{z}(t)).
$$
Therefore for all $t \in [0, T_{\max})$ one has
\begin{equation} \label{PlantEnergyEstim}
  H(z(t)) \le \frac{1}{2} \| z_t(t) \|^2 + \frac{k}{2} \| z_x(t) \|^2 + 2 \beta \le C_1 + C_2 H(\widehat{z}(t)),
\end{equation}
where $C_1 = 2 \beta + 4 M E(0)$ and $C_2 = 4$. Arguing in the same way one can easily verify that
\begin{equation} \label{ObservEnergyEstim}
  H(\widehat{z}(t)) \le C_1 + C_2 H(z(t)) \quad \forall t \in [0, T_{\max}).
\end{equation}
Let $t_0 \in [0, T_{\max})$ be arbitrary. If $H(\widehat{z}(t_0)) \le H^*$, then $H(z(t_0)) \le C_1 + C_2 H^*$
due to \eqref{PlantEnergyEstim}. Suppose, now, that $H(\widehat{z}(t_0)) > H^*$. If $H(\widehat{z}(t)) > H^*$ for all 
$t \in [0, t_0]$, then $H(z(t)) \le H(z(0))$ for any $t \in [0, t_0]$ and $H(\widehat{z}(t_0)) \le C_1 + C_2 H(z(0))$
due to \eqref{ObservEnergyEstim} and the fact that
\begin{equation} \label{DerivOfPlantEnergy}
  \frac{d}{dt} H(z(t)) = - \gamma k \psi( H(\widehat{z}(t)) - H^* ) z_t(t, 1)^2 \le 0
\end{equation}
for all $t \in [0, t_0)$ (see the definition of $\psi(s)$).

On the other hand, if $H(\widehat{z}(t_0)) > H^*$, but there exists $t \in [0, t_0]$ such that
$H(\widehat{z}(t_0)) \le H^*$, then denote $\tau = \sup\{ t \in [0, t_0] \mid H(\widehat{z}(t)) = H^* \}$. Observe that 
$\tau$ is correctly defined and $\tau < t_0$ due to the fact that the function $H(\widehat{z}(t))$ is continuous by our
assumptions on the well-posedness of the closed-loop system. Furthermore, for any $t \in [\tau, t_0]$ one has 
$H(\widehat{z}(t)) > H^*$. Taking into account \eqref{PlantEnergyEstim}, \eqref{DerivOfPlantEnergy} and the definition
of $\tau$ one gets that $H(z(t)) \le H(z(\tau)) \le C_1 + C_2 H^*$ for any $t \in [\tau, t_0]$, which with the use of
\eqref{ObservEnergyEstim} implies that
$$
  H(\widehat{z}(t_0)) \le C_1 + C_2 H(z(t_0)) \le C_1 + C_1 C_2 + C_2^2 H^*.
$$
Since $t_0 \in [0, T_{\max})$ was chosen arbitrarily, one obtains that $H(z(t)) \le H_{\max}$
and $H(\widehat{z}(t)) \le H_{\max}$ for all $t \in [0, T_{\max})$, where
$$
  H_{\max} = \max\Big\{ H^*, H(z(0)), C_1 + C_2 H^*, C_1 + C_2 H(z(0)), C_1 + C_1 C_2 + C_2^2 H^* \Big\}.
$$
It remains to note that the boundedness of $H(z(t))$ and $H(\widehat{z}(t))$ implies that $T_{\max} = + \infty$ by
virtue of our assumption on the well-posedness of the closed-loop system.
\end{proof}

\begin{corollary} \label{Crlr_EnergyObserver}
Let Assumptions \ref{Assumpt_WellPosedness} and \ref{Assumpt_AdmissibleParam} be valid. Then for any $\varepsilon > 0$
such that $\eta(\beta, k) < \varepsilon < \min\{ 1, k \}$, for all $\alpha > 0$ satisfying
\eqref{AdmissibleObserverGain}, for all $\gamma > 0$, and for any initial conditions $(z^0, z^1) \in \mathcal{W}_0$ and
$(\widehat{z}^0, \widehat{z}^1) \in \mathcal{W}_0$ there exist $C$ and $\delta > 0$ such that 
\begin{equation} \label{EnergyDiffEstimate}
  \big| H(z(t)) - H(\widehat{z}(t)) \big| \le C \sqrt{E(0)} e^{- \delta t / 2} 
  \quad \forall t \ge 0.
\end{equation}
\end{corollary}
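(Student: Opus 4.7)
The plan is to write $H(z(t))-H(\widehat z(t))$ as an integral of three difference terms and estimate each by a product of a norm of some derivative of $e=z-\widehat z$ (which we can control via $E(t)$) and a norm involving $z$ or $\widehat z$ (which we can control via $H_{\max}$). Since $E(t)$ decays exponentially by Theorem~\ref{Th_Observer} and the energies stay bounded by Theorem~\ref{Thr_GlobalWellPosed}, the bound will come out with the claimed rate $e^{-\delta t/2}$.

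More concretely, I would begin from the algebraic identity
\[
H(z)-H(\widehat z)=\int_0^1\left(\tfrac12 e_t(z_t+\widehat z_t)+\tfrac{k}{2}e_x(z_x+\widehat z_x)+\beta(\cos\widehat z-\cos z)\right)dx.
\]
Applying the Cauchy--Schwarz inequality to the first two terms and using that $\|e_t\|\le\sqrt{2E(t)}$, $\|e_x\|\le\sqrt{2E(t)/k}$, together with $\|z_t\|,\|\widehat z_t\|\le\sqrt{2H_{\max}}$ and $\|z_x\|,\|\widehat z_x\|\le\sqrt{2H_{\max}/k}$ (all four estimates follow from the non-negativity of the summands in $H$ and from Theorem~\ref{Thr_GlobalWellPosed}), those two terms are each bounded by $2\sqrt{E(t)\,H_{\max}}$. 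For the $\cos$ term I would use the Lipschitz property $|\cos\widehat z-\cos z|\le|e|$ together with $\int_0^1|e|\,dx\le\|e\|_{L_2}$ and Wirtinger's inequality $\|e\|_{L_2}\le\tfrac{2}{\pi}\|e_x\|_{L_2}$ (valid since $e(t,0)=0$), which yields a bound of the form $\tfrac{2\beta}{\pi}\sqrt{2E(t)/k}$. Summing the three estimates gives $|H(z(t))-H(\widehat z(t))|\le C_0\sqrt{E(t)}$ for an explicit constant $C_0$ depending only on $\beta$, $k$ and $H_{\max}$.

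The last step is simply to insert the exponential decay of $E(t)$ from Theorem~\ref{Th_Observer}: $\sqrt{E(t)}\le\sqrt{M}\sqrt{E(0)}\,e^{-\delta t/2}$, so setting $C:=C_0\sqrt{M}$ yields \eqref{EnergyDiffEstimate}. Because $T_{\max}=+\infty$ by Theorem~\ref{Thr_GlobalWellPosed}, the estimate holds for all $t\ge 0$ as stated.

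I do not expect any real obstacle here: the proof is essentially a difference-of-energies computation followed by elementary $L_2$ estimates. The one place where slight care is needed is in choosing the right bound on $\|e\|_{L_2}$ (via Wirtinger rather than the crude Poincaré with both endpoints), so that only $E(t)$ and not a separately bounded quantity enters the $\cos$ term; everything else is routine.
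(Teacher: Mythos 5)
Your proof is correct and follows essentially the same route as the paper: bound $|H(z(t))-H(\widehat z(t))|$ by a constant times $\sqrt{E(t)}$ using the uniform energy bound $H_{\max}$ from Theorem~\ref{Thr_GlobalWellPosed}, then invoke the exponential decay of $E(t)$ from Theorem~\ref{Th_Observer}. The only cosmetic difference is that you factor the differences of squares and apply Cauchy--Schwarz, while the paper uses the Lipschitz continuity of $s\mapsto s^2/2$ on $[-C_1,C_1]$ applied to the $L_2$ norms, and it bounds the cosine term via the pointwise estimate $|e(t,x)|\le\|e_x(t)\|$ rather than Wirtinger's inequality; both are equivalent in substance.
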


\begin{proof}
Let $H_{\max} > 0$ be from the theorem above. Then for any $t \ge 0$ one has
$$
  \max\Big\{ \| z_t(t) \|, \| z_x(t) \|, \| \widehat{z}_t(t) \|, \| \widehat{z}_x(t) \| \Big\} \le 
  C_1 := \sqrt{ \max\left\{ 2, \frac{2}{k} \right\} H_{\max}}.
$$
Hence applying the fact that the function $f(s) = s^2 / 2$ is Lipschitz continuous on $[-C_1, C_1]$ with the Lipschitz
constant $L = C_1$ one obtains that
\begin{equation} \label{TimeDerivDiffEst}
  \left| \frac{1}{2} \| z_t(t) \|^2 - \frac{1}{2} \| \widehat{z}_t(t) \|^2 \right| \le 
  C_1 \| z_t(t) - \widehat{z}_t(t) \| \le C_1 \sqrt{2 E(t)} \quad \forall t \ge 0,
\end{equation}
and
\begin{equation} \label{SpaceDerivDiffEst}
  \left| \frac{k}{2} \| z_x(t) \|^2 - \frac{k}{2} \| \widehat{z}_x(t) \|^2 \right| \le 
  C_1 k \| z_x(t) - \widehat{z}_x(t) \| \le C_1 \sqrt{\frac{2}{k} E(t)} \quad \forall t \ge 0.
\end{equation}
Note that since $z(t, 0) = \widehat{z}(t, 0) = 0$, for any $x \in (0, 1)$ one has
$$
  | z(t, x) - \widehat{z}(t, x) | = 
  \Big| \int_0^x \big( z_x(t, y) - \widehat{z}_x(t, y) \big) \, dy \Big| \le
  \| z_x(t) - \widehat{z}_x(t) \|.
$$
Hence and from the fact that the function $f(s) = 1 - \cos(s)$ is Lipschitz continuous with the Lipschitz
constant $L = 1$ it follows that
\begin{multline*}
  \Big| \int_0^1 \big( \beta (1 - \cos z(t, x) ) - \beta (1 - \cos \widehat{z}(t, x)) \big) \, dx \Big| \\
  \le \beta \int_0^1 |z(t, x) - \widehat{z}(t, x)| \, dx \le \beta \| z_x(t) - \widehat{z}_x(t) \| \le
  \beta \sqrt{\frac{2}{k} E(t)}.
\end{multline*}
Combining \eqref{TimeDerivDiffEst}, \eqref{SpaceDerivDiffEst} and the inequality above one gets that there exists 
$C_2 > 0$ such that $| H(z(t)) - H(\widehat{z}(t)) | \le C_2 \sqrt{E(t)}$ for all $t \ge 0$. Now, applying
Theorem~\ref{Th_Observer} we arrive at the required result.
\end{proof}

\subsection{Performance of the Control System}

Now, we are ready to prove that control law \eqref{ControlLaw} indeed solves the energy control problem
\eqref{ControlGoal}.

\begin{theorem}
Let Assumptions \ref{Assumpt_WellPosedness} and \ref{Assumpt_AdmissibleParam} be valid. Then for all $H^* > 0$ and
$\gamma > 0$, for any $\varepsilon > 0$ such that  $\eta(\beta, k) < \varepsilon < \min\{ 1, k \}$, for all $\alpha > 0$
satisfying \eqref{AdmissibleObserverGain}, and for any initial conditions $(z^0, z^1) \in \mathcal{W}_0$ and
$(\widehat{z}^0, \widehat{z}^1) \in \mathcal{W}_0$ such that $H(z(0)) \ne 0$ and $H(z(t)) \ne 0$ for all $t \ge 0$ one
has $H(z(t)) \to H^*$ as $t \to \infty$.
\end{theorem}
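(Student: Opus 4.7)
The plan is to leverage Corollary~\ref{Crlr_EnergyObserver}, which tells us that $|H(z(t)) - H(\widehat z(t))| \to 0$ exponentially, thereby reducing the target to showing $H(z(t)) \to H^*$. The Lyapunov candidate is the standard speed-gradient choice $V(t) = \tfrac{1}{2}(H(z(t)) - H^*)^2$. From \eqref{DerivOfPlantEnergy},
\[
  \dot V(t) = -\gamma k\, z_t(t,1)^2\, (H(z) - H^*)\,\psi\bigl(H(\widehat z) - H^*\bigr).
\]
Splitting $\psi(H(\widehat z)-H^*) = \psi(H(z)-H^*) + \Delta(t)$ with $|\Delta(t)| \le \omega_\psi\bigl(|H(z)-H(\widehat z)|\bigr)$, and noting that on the bounded range $[-H^*, H_{\max}-H^*]$ from Theorem~\ref{Thr_GlobalWellPosed} the modulus of continuity $\omega_\psi$ of $\psi$ is finite while $|H(z)-H(\widehat z)| \to 0$ exponentially by Corollary~\ref{Crlr_EnergyObserver}, one obtains
\[
  \dot V(t) = -\gamma k\, z_t(t,1)^2\,(H(z)-H^*)\,\psi(H(z)-H^*) + R(t),
\]
where the leading term is non-positive by the sign property of $\psi$ and $R(t)$ is an observer-induced correction that vanishes as $t \to \infty$.

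The remainder of the argument is a LaSalle-type invariance step. Assuming precompactness of the closed-loop orbit in a suitable topology (a natural strengthening of Assumption~\ref{Assumpt_WellPosedness}), the $\omega$-limit set $\Omega$ is non-empty, compact and invariant, and the Lyapunov function $V$ must be constant along any trajectory in $\Omega$. Consequently $\dot V \equiv 0$ along limiting trajectories, and in view of the decomposition above this forces either $H(\widehat z) \equiv H^*$ or $z_t(t,1) \equiv 0$. In the first case Corollary~\ref{Crlr_EnergyObserver} immediately yields $H(z(t)) \to H^*$. In the second the control law \eqref{CLSyst_2} gives $u \equiv 0$, whence \eqref{CLSyst_BoundaryCond} also gives $z_x(t,1) \equiv 0$, so the limiting trajectory solves the unforced sine-Gordon equation with $z(t,0) = 0$ and the full Cauchy data $z_t(t,1) = z_x(t,1) = 0$ at the right endpoint. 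A Holmgren-type unique continuation argument for this semilinear hyperbolic equation then forces $z \equiv 0$ on the limit, hence $H(z) \to 0$ along the original trajectory; but this is precisely the scenario excluded by the hypothesis $H(z(t)) \ne 0$ for all $t \ge 0$. Therefore the second alternative is impossible and $H(z(t)) \to H^*$.

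The main obstacle is the LaSalle step. First, precompactness of the orbit in a topology strong enough to make sense of the limiting equation is delicate for the semilinear wave equation with the nondissipative, nonlinear boundary condition \eqref{CLSyst_BoundaryCond}, and is closely tied to the well-posedness issues already flagged in Assumption~\ref{Assumpt_WellPosedness}; an auxiliary tool here is the $L^2(0,\infty)$-integrability of $e_t(t,1)$ that can be extracted from the proof of Theorem~\ref{Th_Observer} by taking strict inequality in \eqref{AdmissibleObserverGain} (so that $\delta_3(\varepsilon,\alpha) < 0$), which also helps to control the contribution of $R(t)$. Secondly, the Holmgren-type unique continuation claim --- that vanishing of $z_t(\cdot, 1)$ and $z_x(\cdot, 1)$ together with $z(\cdot, 0) = 0$ forces $z \equiv 0$ on $[0, 1]$ --- is classical for the linear wave equation and extends to sine-Gordon via Carleman estimates, but must be verified carefully in the present functional framework. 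The trajectory hypothesis $H(z(t)) \ne 0$ is precisely what rules out the trivial equilibrium, the one scenario the Lyapunov analysis alone cannot exclude.
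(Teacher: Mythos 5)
Your route---LaSalle invariance plus unique continuation---is genuinely different from the paper's, but it contains gaps that cannot be closed under the theorem's stated hypotheses. The first is the precompactness of the closed-loop orbit, which you acknowledge as a ``natural strengthening'' of Assumption~\ref{Assumpt_WellPosedness} but which is in fact a substantive additional hypothesis: for a semilinear wave equation with the nondissipative boundary condition \eqref{CLSyst_BoundaryCond}--\eqref{CLSyst_2}, depending nonlinearly on $z_t(t,1)$, precompactness of trajectories in a topology strong enough to pass to a limiting solution is at least as delicate as the well-posedness the paper already declines to prove. The paper avoids LaSalle entirely: it augments the energy with the multiplier term $\varepsilon \sign(H(z)-H^*)\int_0^1 x z_t z_x\,dx$ and shows via Wirtinger's inequality that the resulting functional $V$ satisfies the \emph{strict} differential inequality $\dot V \le -C_\varepsilon V$ as long as $H(\widehat z(t))\ge H^*+\Delta/2$, and $\dot V \ge C_\varepsilon V$ as long as $H(\widehat z(t))\le H^*-\Delta/2$; Corollary~\ref{Crlr_EnergyObserver} converts hypotheses on $H(z)$ into hypotheses on $H(\widehat z)$. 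These exponential estimates make compactness and invariance arguments unnecessary, and they are then combined with a separate ``once within $\Delta$ of $H^*$, stays within $\Delta$'' step based on the sign of $\dot H(z(t))$.

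The second gap is a logical non sequitur in your final contradiction. Your unique-continuation branch concludes that $H(z(t))\to 0$ as $t\to\infty$, and you assert this is ``precisely the scenario excluded by the hypothesis $H(z(t))\ne 0$ for all $t\ge 0$.'' It is not: that hypothesis forbids the energy from vanishing at any \emph{finite} time, but is perfectly consistent with $H(z(t))\to 0$ asymptotically (consider $H(z(t))=e^{-t}$). Excluding asymptotic extinction requires a quantitative argument, and this is exactly what the paper's multiplier supplies: in the case $H(z(t))<H^*-\Delta$ for all large $t$, the inequality $\dot V\ge C_\varepsilon V$ seeded by $V(\tau)\ge(1-\varepsilon k_0)H(z(\tau))>0$ --- this single evaluation is where the non-vanishing hypothesis is actually used --- forces $H(z(t))\to+\infty$, contradicting the assumed upper bound. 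Your Lyapunov candidate $\tfrac12(H-H^*)^2$ has no such strict growth property when the energy is below the target, so even granting precompactness and the (asserted, not proved) Carleman-type unique continuation for the semilinear problem, the argument cannot close this case. You would in effect need to reprove the paper's multiplier estimate \eqref{DerivHigherEnergyLevel} to finish.
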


\begin{proof}
For any $\varepsilon > 0$ introduce the Lyapunov-like function
$$
  V(t) = H(z(t)) + \varepsilon \sign(H(z(t)) - H^*) g(t), \quad 
  g(t) = \int_0^1 x z_t z_x \, dx.
$$
With the use of the inequalities
$$
  \Big| \int_0^1 x z_t z_x \, dx \Big| \le \int_0^1 |z_t z_x| \, dx \\
  \le \frac{1}{2} \int_0^1 z_t^2 \, dx + \frac{1}{2} \int_0^1 z_x^2 \, dx,
$$
one gets that
\begin{equation} \label{EstimLyapLikeFunc}
  0 \le (1 - \varepsilon k_0) H(z(t)) \le V(t) \le (1 + \varepsilon k_0) H(z(t))
\end{equation}
for all $t \ge 0$ and $\varepsilon < \min\{ 1, k \}$, where $k_0 = \max\{ 1, 1 / k \}$.

Fix arbitrary $t \ge 0$ such that $H(z(t)) \ne H^*$. One has
$$
  \frac{d}{dt} g(t) = \int_0^1 \big( x z_{tt} z_x + x z_t z_{tx} \big) \, dx
  = \int_0^1 \big( x (k z_{xx} - \beta \sin z) z_x + x z_t z_{tx} \big) \, dx.
$$
Note that
$$
  \int_0^1 x z_t z_{tx} \, dx = \frac12 \int_0^1 x (z_t^2)_x \, dx
  = \frac12 z_t(t, 1)^2 - \frac12 \int_0^1 z_t^2 \, dx
$$
and
$$
  \int_0^1 x z_{xx} z_x \, dx = \frac12 \int_0^1 x (z_x^2)_x \, dx
  = \frac12 z_x(t, 1)^2 - \frac12 \int_0^1 z_x^2 \, dx.
$$
Observe also that
\begin{multline*}
  - \int_0^1 x \sin z z_x \, dx = \int_0^1 x (\cos z)_x \, dx = \cos z(t, 1) - \int_0^1 \cos z \, dx \\
  \le \int_0^1 (1 - \cos z) \, dx \le \frac12 \int_0^1 z^2 \, dx \le \frac{2}{\pi^2} \int_0^1 z_x^2 \, dx.
\end{multline*}
Here we used Wirtinger's inequality (see~\cite{Hardy}). Hence for any $\sigma > 0$ one has
\begin{multline*}
  \frac{d}{dt} g(t) \le - \int_0^1 \left( \frac{z_t^2}{2} + k \frac{z_x^2}{2} \right) \, dx
  + \frac12 z_t^2(t, 1) + \frac{k}{2} z_x^2(t, 1) \\
  + \beta \int_0^1 (1 - \cos z) \, dx \le - \frac12 \int_0^1 z_t^2 \, dx -
  \frac{k}{2} \left( 1 - \frac{4 (1 + \sigma) \beta}{k \pi^2} \right) \int_0^1 z_x^2 \, dx \\
  - \sigma \beta \int_0^1 (1 - \cos z) \, dx + \frac12 z_t^2(t, 1) + \frac{k}{2} z_x^2(t, 1).
\end{multline*}
Since $0 \le \beta < k \pi^2 / 4$ by virtue of Assumption~\ref{Assumpt_AdmissibleParam}, there exists $\sigma > 0$ such that
$$
  C_0 := \min\left\{ \sigma, 1 - \frac{4(1 + \sigma) \beta}{k \pi^2} \right\} > 0.
$$
Thus, one gets
\begin{equation} \label{DerivHigherEnergyLevel}
  \frac{d}{dt} g(t) \le - C_0 H(z(t)) + \frac12 z_t^2(t, 1) + \frac{k}{2} z_x^2(t, 1).
\end{equation}
Observe also that 
\begin{equation} \label{LyapFuncDeriv_Smooth}
  \frac{d}{dt} V(t) = - \gamma k \psi(H(\widehat{z}(t)) - H^*) z_t^2(t, 1) + 
  \varepsilon \sign(H(z(t)) - H^*) \frac{d}{dt} g(t)
\end{equation}
for any $t \ge 0$ such that $H(z(t)) \ne H^*$.

Fix an arbitrary $\Delta > 0$. Let us show that for any $T > 0$ there exists $t \ge T$ such that
$|H(z(t)) - H^*| < \Delta$. Indeed, arguing by reductio ad absurdum, suppose that there exists $T > 0$ such that
for any $t \ge T$ one has $|H(z(t)) - H^*| \ge \Delta$. Let us first consider the case when $H(z(t)) \ge H^* + \Delta$
for all $t \ge T$.

From Corollary~\ref{Crlr_EnergyObserver} it follows that there exists $\tau \ge T$ such that
$$
  \big| H(z(t)) - H(\widehat{z}(t)) \big| \le \frac{\Delta}{2} \quad \forall t \ge \tau,
$$
which implies that $H(\widehat{z}(t)) \ge H^* + \Delta / 2$ for all $t \ge \tau$. Define
\begin{gather*}
  \psi_{\Delta} = 
  \min\left\{ \psi(s) \Bigm| s \in \left[ \frac{\Delta}{2}, K \right] \right\} > 0, \\
  \Psi_{\Delta} =
  \max\left\{ \psi(s) \Bigm| s \in \left[ \frac{\Delta}{2}, K \right] \right\} < + \infty,
\end{gather*}
where $K > 0$ is a sufficiently large constant such that $H(\widehat{z}(t)) \le H^* + K$ for all $t \ge 0$
that exists by virtue of Theorem~\ref{Thr_GlobalWellPosed}. By the definitions of $\tau$, $\psi_{\Delta}$ and
$\Psi_{\Delta}$ one has $0 < \psi_{\Delta} \le \psi( H(\widehat{z}(t)) - H^* ) \le \Psi_{\Delta} < + \infty$ 
for all $t \ge \tau$. Applying \eqref{DerivHigherEnergyLevel}, \eqref{LyapFuncDeriv_Smooth} and the above inequality one
obtains that for any $t \ge \tau$ the following inequality holds true
$$
  \frac{d}{dt} V(t) \le - \varepsilon C_0 H(z(t)) - \gamma k \psi_{\Delta} z_t^2(t, 1) +
  \frac{\varepsilon}{2} z_t^2(t, 1) + \frac{\varepsilon k}{2} \gamma^2 \Psi_{\Delta}^2 z_t^2(t, 1).
$$
Choosing $\varepsilon > 0$ sufficiently small and taking into account \eqref{EstimLyapLikeFunc} one gets that
$$
  \frac{d}{dt} V(t) \le - \varepsilon C_0 H(z(t)) \le - C_{\varepsilon} V(t) \quad \forall t \ge \tau,
$$
where $C_{\varepsilon} = \varepsilon C_0 / (1 + \varepsilon k_0)$. 
Therefore $V(t) \le V(\tau) e^{- C_{\varepsilon} (t - \tau)}$ for any $t \ge \tau$, which, due to
\eqref{EstimLyapLikeFunc}, implies that
$$
  H(z(t)) \le \frac{V(\tau)}{1 - \varepsilon k_0} e^{- C_{\varepsilon} (t - \tau)} \quad \forall t \ge \tau.
$$
Thus, $H(z(t)) \to 0$ as $t \to \infty$, which contradicts our assumption that $H(z(t)) \ge H^* + \Delta$ for 
any $t \ge T$. 

Suppose, now, that $H(z(t)) < H^* - \Delta$ for all $t \ge T$. Arguing in a similar way to the case 
$H(z(t)) > H^* + \Delta$, and applying \eqref{DerivHigherEnergyLevel} and \eqref{LyapFuncDeriv_Smooth} one can
verify that for any sufficiently small $\varepsilon > 0$ there exist $\tau \ge T$ and $C_{\varepsilon} > 0$ such that
$V(t) \ge V(\tau) e^{C_{\varepsilon} (t - \tau)}$ for any $t \ge \tau$. Hence with the use of \eqref{EstimLyapLikeFunc}
one gets that
$$
  H(z(t)) \ge \frac{1 - \varepsilon k_0}{1 + \varepsilon k_0} H(z(\tau)) e^{- C_{\varepsilon} t} 
  \quad \forall t \ge \tau.
$$
By our assumption $H(z(\tau)) > 0$. Consequently, $H(z(t)) \to + \infty$ as $t \to \infty$, which contradicts 
the assumption that $H(z(t)) < H^* - \Delta$ for all $t \ge T$. Thus, for any $T > 0$ there exists $t \ge T$ such that 
$|H(z(t)) - H^*| < \Delta$.

Let $\tau > 0$ be such that $| H(z(t)) - H(\widehat{z}(t)) | \le \Delta / 2$ for all $t \ge \tau$ 
(see~Corollary~\ref{Crlr_EnergyObserver}). As we have just proved, there exists $t_0 \ge \tau$ such that 
$|H(z(t_0)) - H^*| < \Delta$. Let us verify that
\begin{equation} \label{EnergyConvergence}
  \big| H(z(t)) - H^* \big| \le \Delta \quad \forall t \ge t_0.
\end{equation}
Then one can conclude that $H(z(t)) \to H^*$ as $t \to \infty$. Arguing by reductio ad absurdum, suppose that there
exists $T \ge t_0$ such that $|H(z(T)) - H^*| > \Delta$. Let us consider the case $H(z(T)) > H^* + \Delta$ first. Define
$$
  \theta = \sup\Big\{ t \in [t_0, T] \Bigm| H(z(t)) = H^* + \Delta \Big\}.
$$
Note that $\theta$ is correctly defined, and $\theta \in (t_0, T)$, since $H(z(t))$ is continuous by our assumption.
From the definition of $\tau$ and $\theta$ it follows that $H(\widehat{z}(t)) \ge H^* + \Delta / 2$ for any 
$t \in [\theta, T]$. Therefore
$$
  \frac{d}{dt} H(z(t)) = - \gamma k \psi\big( H(\widehat{z}(t)) - H^* \big) z_t^2(t, 1) \le 0 
  \quad \forall t \in [\theta, T].
$$
Consequently, $H(z(T)) \le H(z(\theta)) = H^* + \Delta$, which condtradicts the definition of $T$.

Suppose, now, that $H(z(T)) < H^* - \Delta$. Define
$$
  \theta = \sup\Big\{ t \in [t_0, T] \Bigm| H(z(t)) = H^* - \Delta \Big\}.
$$
Taking into account the definition of $\tau$ and the fact that $t_0 \ge \tau$ one gets that 
$H(\widehat{z}(t)) \le H^* - \Delta / 2$ for any $t \in [\theta, T]$, which implies that
$$
  \frac{d}{dt} H(z(t)) = - \gamma k \psi\big( H(\widehat{z}(t)) - H^* \big) z_t^2(t, 1) \ge 0 
  \quad \forall t \in [\theta, T].
$$
Therefore, $H(z(T)) \ge H(z(\theta)) = H^* - \Delta$, which contradicts the definition of $T$. Thus,
\eqref{EnergyConvergence} holds true, and $H(z(t)) \to H^*$ as $t \to \infty$ due to the fact that $\Delta > 0$ was
chosen arbitrarily.
\end{proof}

\begin{remark}
In the theorem above we utilized the assumption that $H(z(t)) \ne 0$  for all $t \ge 0$ (which, in essence, means
that $z(0) \ne 0 \implies z(t) \ne 0$ for all $t \ge 0$). With the use of Corollary~\ref{Crlr_EnergyObserver} one can
verify that this assumption is satisfied, in particular, if the initial conditions of the observer 
$(\widehat{z}^0, \widehat{z}^1)$ are sufficiently close to the initial conditions $(z^0, z^1)$. However, both of these
assumptions seem artificial. Note that the implication $H(z(0)) \ne 0 \implies H(z(t)) \ne 0$ for all $t \ge 0$ can be
easily established if, in particular, solutions of the closed-loop system \eqref{CLSyst_1}--\eqref{CLSyst_2} are
locally unique. Furthermore, even if the local existence and uniqueness theorem is not available (which is the case), it
seems unnatural to expect the control law $u(t) = - \gamma \psi( H(\widehat{z}(t)) - H^* ) z_t(t, 1)$, with
$H(\widehat{z}(t))$ being bounded, to steer the state of the system \eqref{SineGordon}--\eqref{SineGordon3} to the
origin in finite time. However, the authors were unable to prove this result rigorously.
\end{remark}

\section{Numerical evaluation results}\label{Sec_Simulation}

The closed-loop energy control system with plant model \eqref{SineGordon1}--\eqref{SineGordon3}, observer
\eqref{Observer}--\eqref{Observer2} and controller \eqref{ControlLaw} was numerically studied by the simulation in
MATLAB/Simulink software environment. The computation method and simulation results are described below.

\subsection{Computation method}

The PDE equations of the plant model \eqref{SineGordon}--\eqref{SineGordon3} and observer
\eqref{Observer}--\eqref{Observer2} are approximately represented as ODE systems by discretization on a
spatial variable $x$ and implemented by two separate Simulink blocks. 

Consider the discretization procedure for plant model \eqref{SineGordon1}--\eqref{SineGordon3}. The procedure for
observer \eqref{Observer}--\eqref{Observer2} is a similar one with the exception of the boundary conditions.

In the numerical study, the partial differential equation  \eqref{SineGordon1} is discretized in the spatial variable
$x \in \mR^1$ by uniformly splitting the segment $[0,1]$ into  $N$  sub-intervals. The resulting system of $N-1$
ordinary differential equations (ODEs) of the second order is solved over a time interval $[0, T]$ by applying the
variable step Runge--Kutta Method \cite{DormandPrince80}, performed   with  the standard MATLAB routine {\sl ode45}.

At the discretization nodes $x_i=i\cdot h$, $i=1,\dots,N-1$, the second-order spatial derivatives of $z(t,x)$ are
approximately computed  as
\begin{align}
\begin{array}{l}
z_{xx}(t,x_i)=\dfrac{z(t,x_{i+1})-2z(t,x_{i})+z(t,x_{i-1})}{h^2},\quad\text{for}~~2\le i\le N-2,\\
z_{xx}(t,x_1)=\dfrac{z(t,x_{2})-2z(t,x_{1})+z(t,x_{0})}{h^2},\\
z_{xx}(t,x_{N-1})=\dfrac{z(t,x_{N})-2z(t,x_{N-1})+z(t,x_{N-2})}{h^2},
\end{array}
\label{xrr}
\end{align}
where  $h=1/N$ is the {\it discretization step}; $z(t,x_0)$ and $z(t,x_{N})$ correspond to the boundary values of the
PDEs and are calculated outside the ODE solver procedure. The value of $z(t,x_0)$ is taken in accordance with
\eqref{SineGordon2} as $z(t,x_0)=z(t, 0) = 0$. The value of $z(t,x_{N})$ includes the boundary control $u(t)$ and, as
follows from \eqref{SineGordon2},  is calculated as $z(t,x_{N}) =z(t,x_{N-1})+h\,u(t)$. The plant output $y(t)$ is found
in accordance with \eqref{SineGordon3} as $y(t)=z_t(t,x_{N-1})$.
The remaining values $z(t,x_{1})$, \dots, $z(t,x_{N-1})$ are computed by numerical solving the following $(N-1)$ ODE
equations of the second order:
\begin{align}
z_{tt}(t,x_i)=kz_{xx}(t,x_i)-\beta \sin\big(z(t,x_i)\big),\quad 1\le i\le N-1
\label{odes}
\end{align}
with the given initial \eqref{SineGordon1} and boundary  \eqref{SineGordon2} conditions.

The similar procedure is used for ODE representation of the observer equation \eqref{Observer} with the exception that
boundary value $\widehat z(t,x_{N})$ is found as $\widehat z(t,x_{N}) =\widehat
z(t,x_{N-1})+h\left(u(t)+\alpha\big(y(t)-\widehat{z}_t(t,x_{N-1}\big)\right)$, see \eqref{Observer2}, and the initial
conditions are defined by \eqref{Observer1}.

Simulink blocks for plant \eqref{SineGordon1}--\eqref{SineGordon3} and observer \eqref{Observer}--\eqref{Observer2}
models are connected by the measurement relation $y(t)=z_t(t,x_{N-1})$ and controller \eqref{ControlLaw} equation.

\subsection{Simulation results}

The following  parameters were used in the simulation: $k = 0.12$, $\beta = 0.02$ and $\alpha = 20$. Initial conditions 
\eqref{SineGordon2} were set to: $z^0(x) = 5 (1 - \cos(2\pi x))$, $z^1(x) = 0$. For observer
\eqref{Observer}--\eqref{Observer2} zero initial conditions were taken. The desirable energy level was chosen as 
$H^* = 10$. The simulation time was confined to $50$. For simulations, the system of PDEs was uniformly discretized in
the spacial variable $x$ on $N = 1000$ intervals.

The simulation results are depicted in Figs.~\ref{eobscool}--\ref{zcool}. The observer behaviour is illustrated by
Figs.~\ref{eobscool}--\ref{zt05cool}. The spatial-temporal plot of the observation error 
$e(t,x) = z(t,x) - \widehat z(t,x)$ is shown in Fig.~\ref{eobscool}.  Fig.~\ref{Etcool} demonstrates observer
weighted quadratic error $E(t)$ time history. As is seen from the plots, the observation error decays exponentially with
the transient time about $30$ time units. 
\begin{figure}[htpb]
\centering
\includegraphics[width=0.9\linewidth]{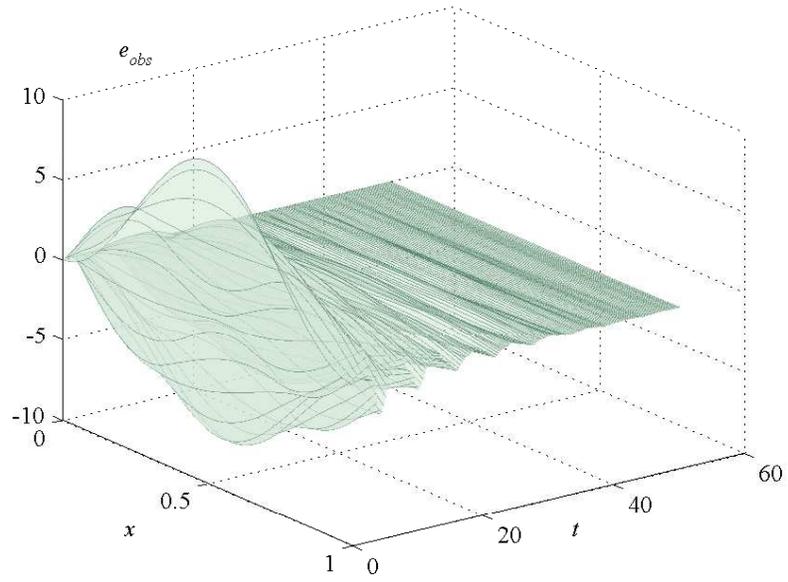}
\caption{The spatial-temporal plot of the observation error $e(t,x)$.}
\label{eobscool}
\end{figure}

\begin{figure}[htpb]
\centering
\includegraphics[width=0.9\linewidth]{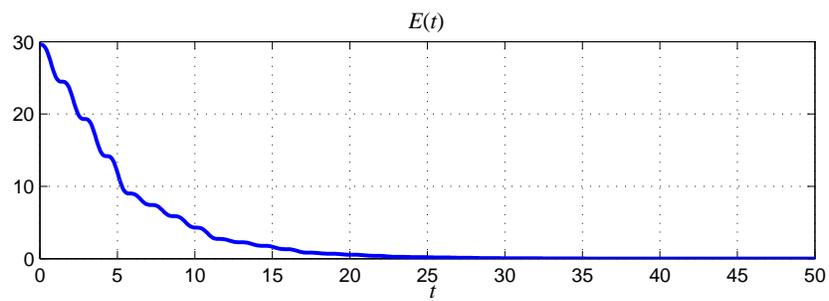}
\caption{Weighted quadratic error $E(t)$ time history.}
\label{Etcool}
\end{figure}

Figures~\ref{z05cool}, \ref{zt05cool} demonstrate the estimation of $z(t,x)$, $z_t(t,x)$ for the particular point
$x = 0.5$ (the middle of the spatial interval $x \in [0,1]$). The plots show that the transient response time of the
observation error is the same.

\begin{figure}[htpb]
\centering
\includegraphics[width=0.9\linewidth]{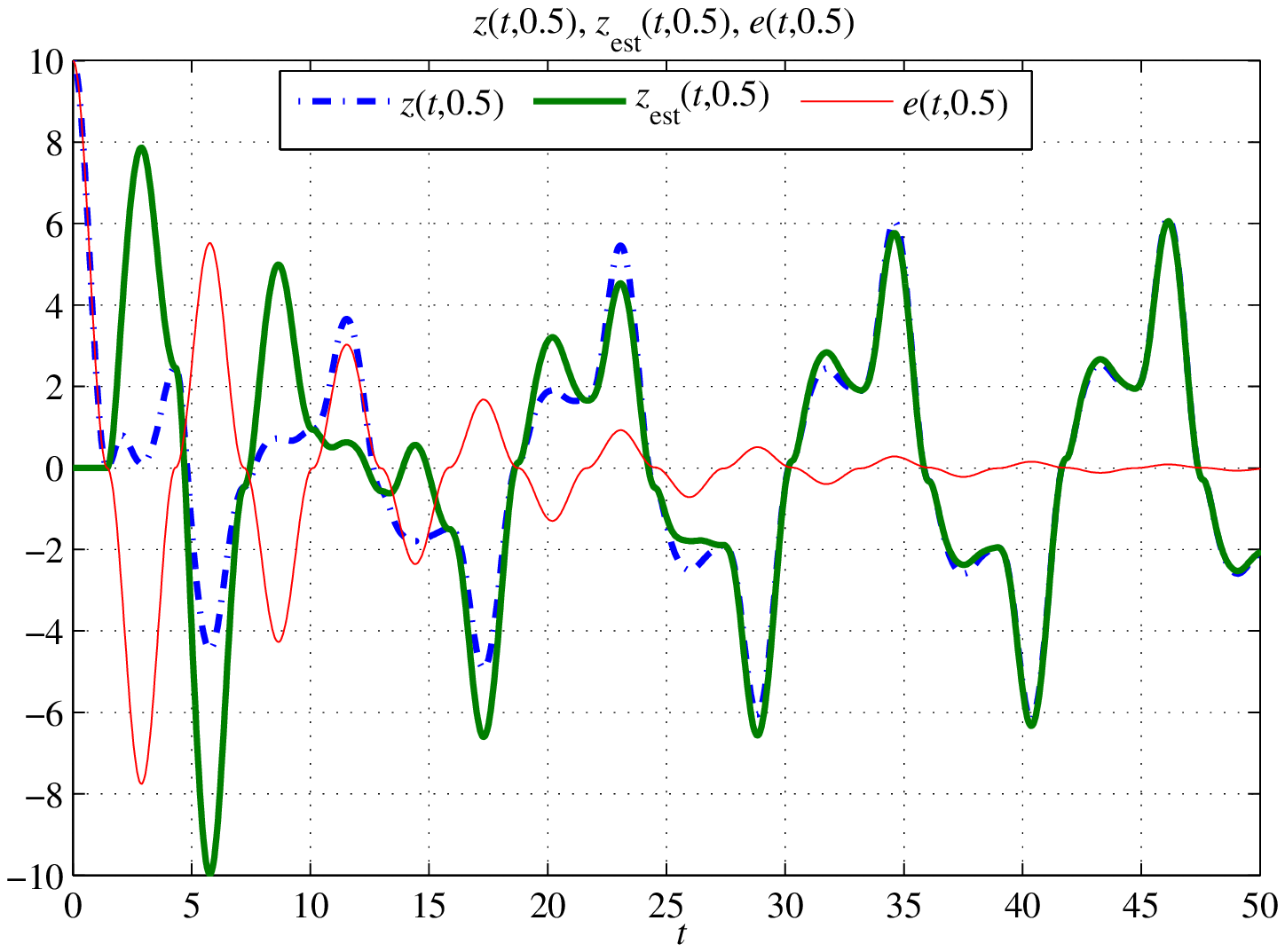}
\caption{Process of $z(t,x)$ estimation for $x=0.5$.  $z(t,0.5)$ and $\hat z(t,0.5)$ time histories.}
\label{z05cool}
\end{figure}
\begin{figure}[htpb]
\centering
\includegraphics[width=0.9\linewidth]{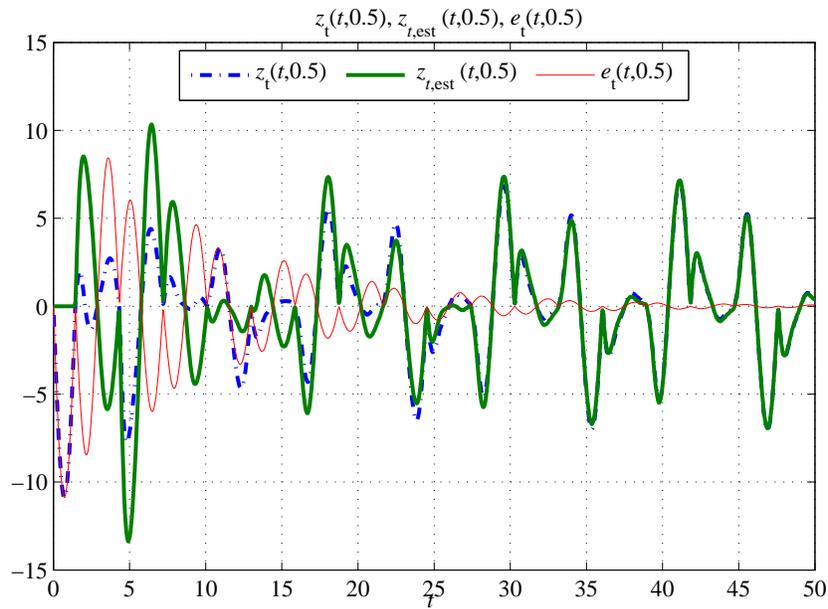}
\caption{Process of $z_t(t,x)$ estimation for $x=0.5$.  $z_t(t,0.5)$ and $\hat {z}_t(t,0.5)$ time histories.}
\label{zt05cool}
\end{figure}

The closed-loop plant-observer-controller \eqref{SineGordon1}--\eqref{SineGordon3},
\eqref{Observer}--\eqref{Observer2},  \eqref{ControlLaw} system behavior is illustrated by Figs.~\ref{Hucool} and
\ref{zcool}. Control action $u(t)$ time history is shown in Fig.~\ref{Hucool} (upper plot), system's energy $H(t)$ and
energy estimate $\widehat H(t)$ time histories are demonstrated in  Fig.~\ref{Hucool}  (lower plot). As is seen from the
$H(t)$ time history, it achieves the prescribed reference value $H^* = 10$ simultaneously with vanishing of the energy
estimation error $H(t) - \widehat H(t)$, and the transient time is about $40$ time units, which is close to that of the
observer.

\begin{figure}[htpb]
\centering
\includegraphics[width=0.9\linewidth]{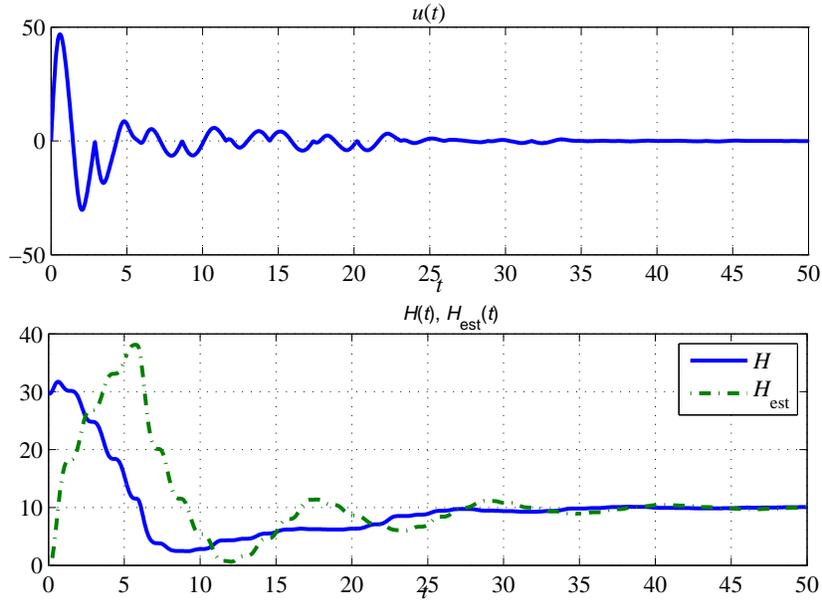}
\caption{Control action $u(t)$ (upper plot), system's energy $H(t)$ and energy estimate $\widehat H(t)$ (lower plot)
time histories.}
\label{Hucool}
\end{figure}

\begin{figure}[htpb]
\centering
\includegraphics[width=0.9\linewidth]{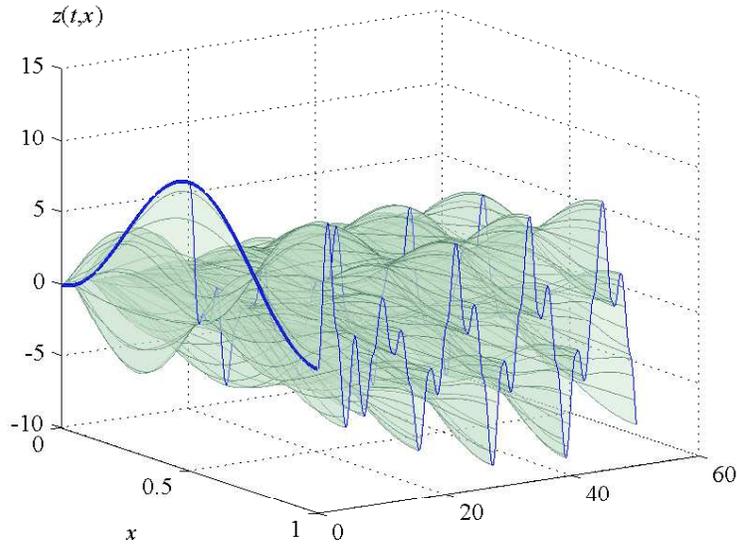}
\caption{The spatial-temporal plot of $z(t,x)$.}
\label{zcool}
\end{figure}

\section{Conclusions}

In this paper the problem of observer-based boundary control of the sine-Gordon model energy is posed for the first
time. A Luenberger-type observer for the sine-Gordon equation is analysed, and explicit inequalities on equation's
parameters ensuring the exponential decay of the estimation error are obtained. With the use of this observer a
speed-gradient control law for solving the energy control problem is proposed. Under the assumption that system's energy
does not vanish in finite time the achievement of the control goal is proved. The results of numerical experiments
demonstrate that the transient time in energy is close to the transient time in observation error, i.e. the closed-loop
system has a reasonable performance.

\section*{References}

\bibliography{IEEEabrv,DFAbib}

\begin{thebibliography}{10}
\expandafter\ifx\csname url\endcsname\relax
  \def\url#1{\texttt{#1}}\fi
\expandafter\ifx\csname urlprefix\endcsname\relax\def\urlprefix{URL }\fi
\expandafter\ifx\csname href\endcsname\relax
  \def\href#1#2{#2} \def\path#1{#1}\fi

\bibitem{Cuevas14}
J.~Cuevas-Maraver, P.~Kevrekidis, F.~Williams (Eds.), The sine-{G}ordon Model
  and its Applications. {F}rom Pendula and {J}osephson Junctions to Gravity and
  High-energy Physics, Springer International Publishing, Cham, 2014.

\bibitem{Fuller96}
C.~Fuller, S.~Elliot, P.~Nelson, Active Control of Vibration, Academic Press,
  London, 1996.

\bibitem{Zuazua90}
E.~Zuazua, Uniform stabilization of the wave equation by nonlinear boundary
  feedback, SIAM J. Control Optim. 28~(2) (1990) 466--477.

\bibitem{Lasiecka92}
I.~Lasiecka, R.~Triggiani, Uniform stabilization of the wave equation with
  {D}irichlet or {N}eumann feedback control without geometric conditions, Appl.
  Math. Optim. 25 (1992) 189--224.

\bibitem{Morgul94}
O.~Morgul, A dynamic control law for the wave-equation, Automatica 30~(11)
  (1994) 1785--1792.

\bibitem{Troltzsch}
F.~Tr\"{o}ltzsch, Optimal Control of Partial Differential Equations: Theory,
  Methods and Applications, American Mathematical Society, Providence, 2010.

\bibitem{Christofides01}
P.~Christofides, Nonlinear and Robust Control of {PDE} Systems, Birkh\"auser,
  Boston, 2001.

\bibitem{Smyshlyaev10}
A.~Smyshlyaev, M.~Krstic, Adaptive Control of {PDE}s, Princeton University
  Press, Princeton, 2010.

\bibitem{Krstic08}
M.~Krstic, A.~Smyshlyaev, Boundary control of {PDE}s: {A} course on
  backstepping designs, SIAM, Philadelphia, 2008.

\bibitem{YangChua97}
T.~Yang, L.~Chua, Impulsive stabilization for control and synchronization of
  chaotic systems: {T}heory and application to secure communication, IEEE
  Trans. Circuits Syst. 44~(10) (1997) 976--988.

\bibitem{FP98}
A.~Fradkov, A.~Pogromsky, Introduction to control of oscillations and chaos,
  World Scientific Publishers, Singapore, 1998.

\bibitem{Pikovsky01}
A.~Pikovsky, M.~Rosenblum, J.~Kurths, Synchronization: {A} Universal Concept in
  Nonlinear Sciences, Cambridge University Press, New York, 2001.

\bibitem{F96}
A.~Fradkov, Swinging control of nonlinear oscillations, Intern. J. Control 64
  (1996) 1189--1202.

\bibitem{Shiriaev00}
A.~Shiriaev, A.~Fradkov, Stabilization of invariant sets for nonlinear
  non-affine systems, Automatica 36~(11) (2000) 1709--1715.

\bibitem{Porubov15}
A.~Porubov, A.~L. Fradkov, B.~Andrievsky, Feedback control for some solutions
  of the sine-{G}ordon equation, Applied Mathematics and Computation 269 (2015)
  17--22.

\bibitem{Orlov17}
Y.~Orlov, A.~L. Fradkov, B.~Andrievsky, Energy control of distributed parameter
  systems via speed-gradient method: {C}ase study of string and sine-{G}ordon
  benchmark models, Intern. J. of Control 90~(11) (2017) 2554--2566.

\bibitem{Kobayashi03}
T.~Kobayashi, Boundary feedback stabilization of the sine-{G}ordon equation
  without velocity feedback, J. Sound and Vibration 266 (2003) 775--784.

\bibitem{Petcu04}
M.~Petcu, R.~Temam, Control for the sine-{G}ordon equation, ESAIM-Control
  Optimisation And Calculus Of Variations 10~(4) (2004) 553--573.

\bibitem{Kobayashi04}
T.~Kobayashi, Adaptive stabilization of the sine-{G}ordon equation by boundary
  control, Mathematical Methods in the Applied Sciences 27~(8) (2004) 957--970.

\bibitem{DFA16}
M.~Dolgopolik, A.~L. Fradkov, B.~Andrievsky, Boundary energy control of the
  sine-{G}ordon equation, IFAC-PapersOnLine 48~(14) (2016) 148--153.

\bibitem{Fridman2016}
E.~Fridman, M.~Terushkin, New stability and exact observability conditions for
  semilinear wave equations, Automatica 63 (2016) 1--10.

\bibitem{Pazy}
A.~Pazy, Semigroups of Linear Operators and Applications to Partial
  Differential Equations, Springer-Verlag, New York, 1983.

\bibitem{Hardy}
G.~H. Hardy, J.~E. Littlewood, G.~P\'olya, Inequalities, Cambridge University
  Press, Cambridge, 1952.

\bibitem{DormandPrince80}
J.~R. Dormand, P.~J. Prince, A family of embedded {R}unge-{K}utta formulae, J.
  Comp. Appl. Math. 6 (1980) 19--26.

\end{thebibliography}

\end{document}